\theoremstyle{plain}
\newtheorem{theorem}{Theorem}
\theoremstyle{definition}
\theoremstyle{remark}
\newtheorem{remark}{Remark}
\begin{document}
	
	\author{Phuong Le}
	\address{Phuong Le$^{1,2}$ (ORCID: 0000-0003-4724-7118)\newline
		$^1$Faculty of Economic Mathematics, University of Economics and Law, Ho Chi Minh City, Vietnam; \newline
		$^2$Vietnam National University, Ho Chi Minh City, Vietnam}
	\email{phuongl@uel.edu.vn}
	
	\subjclass[2020]{35J92, 35B06, 35A02, 35B09}
	\keywords{$p$-Laplace equation, critical Hardy-Sobolev exponent, Hardy potential, classification of solutions}
	
	
	\title[On $p$-Laplace equations with a critical Hardy-Sobolev exponent]{On $p$-Laplace equations with a critical Hardy-Sobolev exponent and a Hardy potential}
	\begin{abstract}
		For $N\ge2$ and $1<p<N$, we classify all positive $\mathcal{D}^{1,p}(\mathbb{R}^N)$-solutions to $p$-Laplace equations with a critical Hardy-Sobolev exponent and a Hardy potential.
	\end{abstract}
	
	\maketitle
	
	\section{Introduction}
	
	This paper is devoted to the doubly critical problem
	\begin{equation}\label{main}
		\begin{cases}
			-\Delta_p u - \dfrac{\mu}{|x|^p} u^{p-1} = \dfrac{u^{p^*_s-1}}{|x|^s} & \text{ in } \mathbb{R}^N,\\
			u>0 & \text{ in } \mathbb{R}^N,\\
			u \in \mathcal{D}^{1,p}(\mathbb{R}^N),
		\end{cases}
	\end{equation}
	where $N\ge2$, $1<p<N$, $0<\mu<\left(\frac{N-p}{p}\right)^p$, $0< s<p$, $p^*_s:=\frac{(N-s)p}{N-p}$ is the critical Hardy-Sobolev exponent, $\Delta_p u={\rm div}(|\nabla u|^{p-2}\nabla u)$ is the $p$-Laplacian of $u$ and
	\[
	\mathcal{D}^{1,p}(\mathbb{R}^N) = \left\{ u \in L^{p^*}(\mathbb{R}^N) \mid \int_{\mathbb{R}^N} |\nabla u|^p dx < \infty \right\}
	\]
	with $p^*:=\frac{Np}{N-p}$. Solutions $u$ to \eqref{main} is understood in the following weak sense
	\begin{equation}\label{weak_sol}
		\int_{\mathbb{R}^N} |\nabla u|^{p-2} \nabla u \cdot \nabla \varphi - \mu \int_{\mathbb{R}^N} \frac{1}{|x|^p} u^{p-1} \varphi = \int_{\mathbb{R}^N} \frac{u^{p^*_s-1}}{|x|^s} \varphi \quad\text{ for all } \varphi \in C^1_c(\mathbb{R}^N\setminus\{0\}).
	\end{equation}
	
	By standard elliptic estimates, we know that any solution $u$ of \eqref{main} belongs to $C^{1,\alpha}_{\rm loc}(\mathbb{R}^N\setminus\{0\})$ for some $0<\alpha<1$ (see \cite{MR709038,MR969499,MR727034}). Due to the assumption $u \in \mathcal{D}^{1,p}(\mathbb{R}^N)$, solutions to \eqref{main} are usually called \textit{finite energy} ones. It is worth pointing out that problem \eqref{main} naturally arises as the Euler-Lagrange equation of the following Hardy–Sobolev–Maz'ya inequality
	\[
	\int_{\mathbb{R}^N} \left(|\nabla u|^p - \frac{\mu}{|x|^p} |u|^p\right) dx \ge C \left(\int_{\mathbb{R}^N} \frac{|u|^{p^*_s}}{|x|^s} dx\right)^\frac{p}{p^*_s},
	\]
	which holds for all $u \in \mathcal{D}^{1,p}(\mathbb{R}^N)$ and some optimal constant $C>0$. This inequality is a consequence of the well known Hardy inequality
	\[
	\int_{\mathbb{R}^N} |\nabla u|^p dx \ge \left(\frac{N-p}{p}\right)^p \int_{\mathbb{R}^N} \frac{1}{|x|^p} |u|^p\ dx
	\]
	and the Sobolev–Hardy inequality
	\[
	\int_{\mathbb{R}^N} |\nabla u|^p dx \ge C' \left(\int_{\mathbb{R}^N} \frac{|u|^{p^*_s}}{|x|^s} dx\right)^\frac{p}{p^*_s},
	\]
	which both hold for all $u \in \mathcal{D}^{1,p}(\mathbb{R}^N)$ and some constant $C'>0$ (see \cite{MR768824,MR1616905}).
	
	Problem \eqref{main} with $p=2$ was studied extensively in the literature due to the availability of several analytic tools for the Laplace operator. In particular, Caffarelli, Gidas and Spruck \cite{MR982351} proved that every classical positive solution to the problem $-\Delta u = u^\frac{N+2}{N-2}$ in $\mathbb{R}^N$ ($N\ge3$) is radial and hence classified by the formula
	\[
	u(x) = \left(\frac{\sqrt{N(N-2)} \lambda}{\lambda^2 + |x-x_0|^2}\right)^\frac{N-2}{2}
	\]
	for some $\lambda>0$ and $x_0\in\mathbb{R}^N$.
	Later, Chen and Li \cite{MR1121147} simplified the proof by combining the moving plane method \cite{MR143162,MR333220} with the Kelvin transform. The same method can be applied to to the problem $-\Delta u = \frac{u^{2^*_s-1}}{|x|^s}$ in $\mathbb{R}^N$, where $N\ge3$, $0<s<2$, to yield the classification of all positive solutions in $H^1_{\rm loc}(\mathbb{R}^N)\cap L^\infty_{\rm loc}(\mathbb{R}^N)$ (see \cite{2017arXiv170304353G} for instance). We stress that the semilinear critical problem is invariant under the Kelvin transform and the transform provides the right decaying properties of solutions so that the moving plane method on the whole space can be carried out. Hence the above results hold for all positive solutions without the finite energy assumption $u \in \mathcal{D}^{1,2}(\mathbb{R}^N)$. We also mention that all the solutions of the problem \eqref{main} for $p =2$ and $s=0$ have been classified by Terracini \cite{MR1364003}.
	
	Next, we consider the quasilinear case $p\ne2$. This case is much more difficult due to the nonlinear nature of the $p$-Laplacian, the lack of regularity of the solutions and the fact that comparison principles are not equivalent to maximum principles in this case. Furthermore, the Kelvin type transform is not available for $p$-Laplace equations with $p\ne2$. To overcome this difficulty, the main approach is to establish optimal estimates on asymptotic behaviors of solutions, then exploit the moving plane method or an integral identity to get the symmetry or the explicit form of the solutions. We summarize some main achievements regarding problem \eqref{main} in the quasilinear case below.
	\begin{itemize}
		\item Solutions to problem \eqref{main} with $\mu=s=0$ was classified by Sciunzi \cite{MR3459013} and V\'{e}tois \cite{MR3411668} via the moving plane method (the radial solutions were studied before in \cite{MR964617} and the symmetry of minimizers of the corresponding Sobolev inequality were derived in \cite{MR463908}). Later, Ciraolo, Figalli and Roncoroni \cite{MR4135671} introduced a new approach exploiting an integral formula to classify $\mathcal{D}^{1,p}(\Sigma)$ positive solutions to critical anisotropic $p$-Laplace equations in any convex cones $\Sigma\subset\mathbb{R}^N$.
		\item The case $\mu>s=0$ was studied by Oliva, Sciunzi and Vaira \cite{MR4124427} via the moving plane method exploiting some estimates in \cite{MR3369267,MR3581526} (see also \cite{MR2233146} for the uniqueness of radial solutions and the symmetry of minimizers).
		\item Solutions to \eqref{main} in the case $s>\mu=0$ have been classified recently in \cite{MR4611678,MR4343877} exploiting estimates in \cite{MR4194414} (see also \cite{MR1695021} for the radial case).
		\item In the general case that $\mu>0$, $s>0$, the uniqueness and asymptotic estimates for radial solutions are available, see \cite[Lemma 2.3]{MR2388757} and \cite[Theorem 1.4]{MR3467702} (see also \cite{MR1695021}). Some estimates for nonradial solutions have been obtained recently in \cite{MR4628991}.
	\end{itemize}
	
	In this paper, we follow the approach in \cite{MR4124427} to prove the symmetry and hence the uniqueness of solutions to \eqref{main}. Our main result is the following theorem.
	\begin{theorem}\label{th:main}
		Assume $1<p<N$, $0<\mu<\left(\frac{N-p}{p}\right)^p$, $0< s<p$. Let $u$ be a solution to \eqref{main}. Then $u$ is radial and radially decreasing with respect to the origin.
	\end{theorem}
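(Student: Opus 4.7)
The strategy is to implement the moving plane method in the spirit of Oliva--Sciunzi--Vaira \cite{MR4124427}, adapted to the doubly singular weights of \eqref{main}. Since the equation is invariant only under rotations fixing the origin, the only candidate hyperplanes of symmetry pass through $0$, so the moving plane procedure must be started at infinity in every direction $\nu$ and stopped exactly at $\lambda_0=0$. Fix $\nu=e_1$, set $\Sigma_\lambda=\{x_1>\lambda\}$, $x^\lambda=(2\lambda-x_1,x_2,\dots,x_N)$, and $u_\lambda(x)=u(x^\lambda)$. The decisive geometric input is $|x^\lambda|<|x|$ for $x\in\Sigma_\lambda$, $\lambda>0$, which makes both weights $|x|^{-p}$ and $|x|^{-s}$ strictly larger when evaluated at $x^\lambda$ and thus gives the favorable sign of the zeroth-order terms in the equation for $w=u_\lambda-u$.

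Before running the procedure one needs sharp two-sided pointwise bounds on $u$ and $|\nabla u|$. Let $0<\gamma_1<\frac{N-p}{p}<\gamma_2<\frac{N-p}{p-1}$ be the two positive roots of $\gamma^{p-1}\bigl((N-p)-(p-1)\gamma\bigr)=\mu$; these are the characteristic exponents of the linearized Hardy operator. Using Moser iteration, the local $C^{1,\alpha}$ regularity recalled after \eqref{weak_sol}, the finite-energy condition, together with the non-radial estimates of \cite{MR4628991} and the radial analyses of \cite{MR2388757,MR3467702}, I would establish
\[
u(x) \asymp |x|^{-\gamma_1}\text{ near }0,\qquad u(x) \asymp |x|^{-\gamma_2}\text{ near infinity},
\]
together with the matching gradient bounds $|\nabla u|(x)\asymp |x|^{-\gamma_i-1}$ in each regime. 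These estimates furnish both the smallness needed to start the planes at infinity and the control near the singular point $0^\lambda\in\Sigma_\lambda$ that will matter later.

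The start of the procedure is then immediate: for $\lambda$ large the fast decay of $u$ versus $u_\lambda\sim|x^\lambda|^{-\gamma_1}$ yields $u\le u_\lambda$ on $\Sigma_\lambda\cap\{|x|\ge R_0\}$, and a weak comparison principle in the narrow set $\Sigma_\lambda\cap\{|x|\le R_0\}$ promotes this to $u\le u_\lambda$ on all of $\Sigma_\lambda$. Setting $\lambda_0=\inf\{\bar\lambda>0:u\le u_\mu\text{ in }\Sigma_\mu\text{ for all }\mu\ge\bar\lambda\}$, the core of the proof is to show $\lambda_0=0$. Assuming by contradiction $\lambda_0>0$, continuity gives $u\le u_{\lambda_0}$ on $\Sigma_{\lambda_0}$, and the Damascelli--Sciunzi strong comparison principle applied on each connected component of $\Sigma_{\lambda_0}\setminus Z_{\lambda_0}$, with $Z_{\lambda_0}=\{\nabla u=\nabla u_{\lambda_0}=0\}$, upgrades this to strict inequality there. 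Choosing $\lambda_n\nearrow\lambda_0$ and $x_n\in\Sigma_{\lambda_n}$ with $u(x_n)>u_{\lambda_n}(x_n)$, the asymptotic bounds prevent $x_n$ from escaping to infinity or concentrating at $0^{\lambda_0}$, while a Hopf-type boundary lemma combined with a narrow-strip weak comparison rules out accumulation on $T_{\lambda_0}$. Any limit $\bar x$ then satisfies $u(\bar x)=u_{\lambda_0}(\bar x)$ in the region of strict inequality, contradicting the strong comparison once $\bar x\notin Z_{\lambda_0}$ is checked via the density estimates of \cite{MR3581526,MR3369267}.

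Once $\lambda_0=0$ is attained, applying the same argument in the direction $-e_1$ yields the reverse inequality, so $u$ is symmetric in $x_1$ about $T_0$; since $e_1$ was arbitrary, $u$ is symmetric across every hyperplane through the origin, hence radial, and the strict comparison $u\le u_\lambda$ for every $\lambda>0$ translates into radial monotone decay away from $0$. The hardest step is the contradiction argument at $\lambda_0>0$: the simultaneous presence of the Hardy potential and the Hardy--Sobolev weight, together with the possible non-triviality of the critical set $\{\nabla u=0\}$ intrinsic to the $p$-Laplacian with $p\ne 2$, forces a delicate combination of strong comparison, sharp two-sided asymptotics, Hopf-type boundary lemmas, and fine measure-theoretic estimates on $\{\nabla u=0\}$ that no single classical maximum principle can deliver.
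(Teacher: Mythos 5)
Your overall skeleton (moving planes in every direction, sharp two-sided asymptotics for $u$ and $\nabla u$ at $0$ and at infinity with exponents $\gamma_1,\gamma_2$, strong comparison off the critical set, a Hopf lemma at the end) matches the paper's. But the two load-bearing steps are carried out by a pointwise/compactness argument that does not survive in the unbounded setting. First, the start of the procedure: for $x\in\Sigma_\lambda$ far from the reflected origin one has $|x^\lambda|/|x|\to1$, so the bounds $c|x|^{-\gamma_2}\le u(x)\le C|x|^{-\gamma_2}$ with $c<C$ only give $u_\lambda(x)\ge c|x^\lambda|^{-\gamma_2}$ versus $u(x)\le C|x|^{-\gamma_2}$, which does \emph{not} imply $u\le u_\lambda$ on $\Sigma_\lambda\cap\{|x|\ge R_0\}$ no matter how large $\lambda$ is; the pointwise inequality comes for free only near $0^\lambda$, where $u_\lambda$ blows up. Second, in the contradiction step at $\lambda_0\neq0$, your sequence $x_n$ with $u(x_n)>u_{\lambda_n}(x_n)$ can escape to infinity, and the claim that ``the asymptotic bounds prevent'' this is exactly what two-sided bounds with non-matching constants cannot deliver (even a sharp first-order asymptotic $u\sim L|x|^{-\gamma_2}$ would give $u_{\lambda_0}/u\to1$ along such a sequence, not a strict gap). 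This failure of the pointwise argument in $\mathbb{R}^N$ without a Kelvin transform is precisely the obstruction the quasilinear literature you cite is built to avoid.

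What the paper does instead is an integral (energy) version of the comparison: it tests the equations for $u$ and $u_\lambda$ against $\eta^\alpha u^{1-p}(u^p-u_\lambda^p)^+\chi_{\Sigma_\lambda}$ and $\eta^\alpha u_\lambda^{1-p}(u^p-u_\lambda^p)^+\chi_{\Sigma_\lambda}$, extracts a coercive term controlling $\int u^2(|\nabla u|+|\nabla u_\lambda|)^{p-2}|\nabla\ln u-\nabla\ln u_\lambda|^2$, absorbs the Hardy--Sobolev right-hand side thanks to a small factor $|\lambda|^{-\beta^*}$ (resp.\ $\overline{R}^{-\beta^*}$) produced by the decay exponent $\gamma_2>\frac{N-p}{p}$, sends the cut-off errors to zero as $R\to\infty$ via a decay lemma for the truncated energy $L(R)$, and deals with the critical set $Z_u$ through the Damascelli--Sciunzi weighted Poincar\'e inequality with weight $|\nabla u|^{p-2}$ (for $p>2$) together with $|Z_u|=0$ and the reversed integrability of $|\nabla u|^{-t}$. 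None of this machinery appears in your proposal, and without it neither the nonemptiness of the starting set nor the contradiction at $\lambda_0\neq0$ closes. Your asymptotic gradient estimates and the final Hopf argument are consistent with the paper (which proves the gradient bounds by a blow-up onto the pure Hardy equation and its radial classification), but the core comparison mechanism must be replaced by the integral method.
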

	\begin{remark}
		Combining Theorem \ref{th:main} with Theorem 1.4 in \cite{MR3467702}, we deduce that problem \eqref{main} has a unique solution up to a dilation. This means that if $u$ is a solution to \eqref{main}, then all of its solutions are of the form $u_\tau(x) := \tau^\frac{N-p}{p} u(\tau x)$ for some $\tau>0$.
	\end{remark}
	
	The remainder of this paper is divided into two sections. In Section \ref{sect2}, we recall some known results and prove some sharp pointwise gradient estimates which will be used later. Section \ref{sect3} is devoted to the proof of our main result, namely, Theorem \ref{th:main}, via the method of moving planes.
	
	\section{Preliminaries and asymptotic estimates}\label{sect2}
	
	It is important to study the summability of the second derivatives of the solutions to $p$-Laplace equations since such solutions are generally not in the $C^2$ class. Such results, which are due to Damascelli and Sciunzi \cite{MR2096703}, will be recalled here.
	
	\begin{theorem}[Hessian and reversed gradient estimates \cite{MR2096703}]\label{th:hessian}
		Let $u$ be a solution to \eqref{main}. Then for $i=1,\dots,N$ we have
		\[
		\int_{\Omega} \frac{|\nabla u|^{p-2-\beta} |\nabla \frac{\partial u}{\partial x_i}|^2}{|x-y|^\gamma} dx \le C
		\]
		for any $\Omega \subset\subset \mathbb{R}^N\setminus\{0\}$ and uniformly for any $y \in \Omega$, where $0\le\beta<1$ and $\gamma<N-2$ if $N\ge3$, $\gamma=0$ if $N=2$ and
		\[
		C = C(p,\beta,\gamma,\mu,s,\|u\|_{L^\infty(\Omega)},\|\nabla u\|_{L^\infty(\Omega)},{\rm dist}(\Omega,\{0\})).
		\]
		Moreover,
		\begin{equation}\label{inverse_int}
			\int_{\Omega} \frac{1}{|\nabla u|^t} \frac{1}{|x-y|^\gamma} dx \le C^* \text{ uniformly for any } y \in \Omega,
		\end{equation}
		where $(p-2)^+ \le t < p-1$ and $\gamma<N-2$ if $N\ge3$, $\gamma=0$ if $N=2$ and $C^*$ depends on $C$.
	\end{theorem}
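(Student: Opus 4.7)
The plan is to adapt the Damascelli--Sciunzi regularization scheme. Since $\Omega\subset\subset\mathbb{R}^N\setminus\{0\}$ is bounded away from the origin, both $|x|^{-p}$ and $|x|^{-s}$ are smooth and bounded on $\Omega$, so the equation in \eqref{main} reduces on $\Omega$ to $-\Delta_p u = h(x)$ for a Hölder continuous $h$ depending on $u$. I would first approximate $u$ by classical solutions $u_\varepsilon \in C^{2,\alpha}_{\rm loc}(\Omega)$ of the non-degenerate problem
\[
-\operatorname{div}\bigl((\varepsilon+|\nabla u_\varepsilon|^2)^{(p-2)/2}\nabla u_\varepsilon\bigr) = h_\varepsilon(x)\text{ in }\Omega,\qquad u_\varepsilon = u\text{ on }\partial\Omega,
\]
for which classical theory gives $u_\varepsilon \to u$ in $C^{1,\alpha}_{\rm loc}(\Omega)$ together with uniform $W^{1,\infty}_{\rm loc}$ bounds. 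All subsequent estimates are to be derived for $u_\varepsilon$ with constants uniform in $\varepsilon$, and then passed to the limit.

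The reversed gradient estimate \eqref{inverse_int} must be proved first, as it is what closes the Hessian argument. The idea is to show that $|\nabla u_\varepsilon|^{-1}$ is essentially a weak subsolution of a linear elliptic equation obtained by differentiating the regularized equation and exploiting the sign of $h$ together with the positivity of $u$; a weighted Moser iteration then yields $L^t_{\rm loc}$ control of $|\nabla u_\varepsilon|^{-1}$ throughout the range $(p-2)^+ \le t < p-1$, uniformly in $\varepsilon$. The weight $|x-y|^{-\gamma}$ with $\gamma<N-2$ belongs to the Kato class in dimension $N$, which lets it be absorbed in the iteration with constants uniform in $y\in\Omega$. The main obstacle here is ruling out that the critical set $\{\nabla u=0\}$ carries positive capacity; this is precisely where the structural properties of \eqref{main}, namely positivity of $u$ and of the right-hand side, are essential.

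For the Hessian estimate, differentiating the regularized equation in $x_i$ produces a linear elliptic equation for $v_i := \partial_i u_\varepsilon$ with symmetric coefficient matrix
\[
A^\varepsilon_{jk} = (\varepsilon+|\nabla u_\varepsilon|^2)^{(p-2)/2}\delta_{jk} + (p-2)(\varepsilon+|\nabla u_\varepsilon|^2)^{(p-4)/2}\partial_j u_\varepsilon\,\partial_k u_\varepsilon,
\]
uniformly elliptic in the weighted sense. Testing this against $\varphi = v_i\,\eta^2(\varepsilon+|\nabla u_\varepsilon|^2)^{-\beta/2}|x-y|^{-\gamma}$ with a cutoff $\eta\in C^\infty_c(\Omega)$, integrating by parts and absorbing cross terms by Young's inequality extracts the positive principal term
\[
\int_\Omega (\varepsilon+|\nabla u_\varepsilon|^2)^{(p-2-\beta)/2}\frac{|\nabla v_i|^2}{|x-y|^\gamma}\eta^2\,dx,
\]
while the residual terms, all of the form $\int(\varepsilon+|\nabla u_\varepsilon|^2)^{-t/2}|x-y|^{-\gamma'}$ with $t<p-1$ and $\gamma'<N-2$, are controlled precisely by \eqref{inverse_int}. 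The admissibility constraints $0\le\beta<1$ and $\gamma<N-2$ are exactly those that keep the Young-inequality residuals within the range already controlled. Passing $\varepsilon\to 0$ via Fatou's lemma, using the pointwise convergence of $A^\varepsilon_{jk}$ to $A_{jk}$ away from the critical set, then yields the Hessian estimate.
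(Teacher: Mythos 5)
The paper does not prove this theorem; it refers to Damascelli--Sciunzi \cite{MR2096703}, whose argument (with $|x|^{-p}$ and $|x|^{-s}$ harmless on $\Omega\subset\subset\mathbb{R}^N\setminus\{0\}$) runs in the \emph{opposite} logical order to yours, and this inversion is where your proposal breaks down. In \cite{MR2096703} the Hessian estimate is proved first and is self-contained: after regularizing and differentiating, one tests the linearized equation with $u_{x_i}G_\varepsilon(|\nabla u_\varepsilon|)(\varepsilon+|\nabla u_\varepsilon|^2)^{-\beta/2}|x-y|^{-\gamma}\eta^2$, and the residual terms produced by Young's inequality carry \emph{nonnegative} powers of $(\varepsilon+|\nabla u_\varepsilon|^2)$ — schematically $(\varepsilon+|\nabla u_\varepsilon|^2)^{(p-\beta)/2}|x-y|^{-\gamma-2}$, bounded by $\|\nabla u\|_{L^\infty(\Omega)}^{p-\beta}\int|x-y|^{-\gamma-2}$, which is finite precisely because $\gamma+2<N$. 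Your claim that these residuals are of the form $\int(\varepsilon+|\nabla u_\varepsilon|^2)^{-t/2}|x-y|^{-\gamma'}$ and "are controlled precisely by \eqref{inverse_int}" is incorrect; no a priori control of negative powers of the gradient is needed for the Hessian bound.

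The genuine gap is your proof of \eqref{inverse_int} itself. You propose to show that $|\nabla u_\varepsilon|^{-1}$ is "essentially a weak subsolution" of a linear elliptic equation and run a weighted Moser iteration, invoking Kato-class absorption and a capacity argument for the critical set. None of this is substantiated, and I do not believe it can be: there is no known differential inequality satisfied by $|\nabla u|^{-1}$ that would launch such an iteration, and the restriction $t<p-1$ (rather than some Moser-type exponent) signals that a different mechanism is at work. The actual argument in \cite{MR2096703} is elementary once the Hessian estimate is in hand: since $u>0$ is continuous and $\Omega$ is bounded away from the origin, the right-hand side $\mu|x|^{-p}u^{p-1}+|x|^{-s}u^{p^*_s-1}$ is bounded below by some $c>0$ on $\Omega$; integrating the equation over the sublevel set $A_\delta=\{|\nabla u|\le\delta\}\cap\Omega$ and using Cauchy--Schwarz against the Hessian estimate gives
\[
c\,|A_\delta|\le C\int_{A_\delta}|\nabla u|^{p-2}|D^2u|
\le C\Bigl(\int_\Omega|\nabla u|^{p-2-\beta}|D^2u|^2\Bigr)^{1/2}\delta^{\frac{p-2+\beta}{2}}|A_\delta|^{1/2},
\]
hence $|A_\delta|\le C\delta^{p-2+\beta}$, and the layer-cake formula (combined with H\"older to accommodate the weight $|x-y|^{-\gamma}$) yields \eqref{inverse_int} for all $t<p-2+\beta$ with $\beta<1$ arbitrary, i.e.\ all $t<p-1$. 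You need to replace your second paragraph by this argument and run it \emph{after}, not before, the Hessian estimate.
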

	The proof of Theorem \ref{th:hessian} is almost the same as that of \cite[Theorem 1.1]{MR2096703} with some minor changes and hence will be omitted.
	
	Let $\rho\in L^1(\Omega)$ be a positive function, where $\Omega$ is a bounded domain of $\mathbb{R}^N$. We define $H^1(\Omega,\rho)$ as the completion of $C^1(\Omega)$ with the norm
	\[
	\|v\|_{H^1(\Omega,\rho)} = \|v\|_{L^2(\Omega)} + \|\nabla v\|_{L^2(\Omega,\rho)},
	\]
	where $\|\nabla v\|_{L^2(\Omega,\rho)}^2 = \int_{\Omega} \rho |\nabla v|^2 dx$. We also define $H^1_0(\Omega,\rho)$ as the closure of $C^\infty_c(\Omega)$ in $H^1(\Omega,\rho)$.
	The following weighted Poincar\'{e} type inequality will be utilized later.
	\begin{theorem}[Weighted Poincar\'{e} type inequality \cite{MR2096703}]\label{th:wp_ine}
		Let $\rho$ be a positive function such that
		\begin{equation}\label{wp_ine_cond1}
			\int_\Omega \frac{1}{\rho |x-y|^\gamma} dy \le C_1 \quad\text{ for all } x \in \Omega,
		\end{equation}
		where $\Omega$ is a bounded domain in $\mathbb{R}^N$ and $\gamma<N-2$ if $N\ge3$, $\gamma=0$ if $N=2$.
		Let $w \in H^1(\Omega,\rho)$ be such that
		\begin{equation}\label{wp_ine_cond2}
			|w(x)| \le C_2 \int_\Omega \frac{|\nabla w(y)|}{|x-y|^{N-1}} dy \quad\text{ for all } x \in \Omega.
		\end{equation}
		Then we have
		\[
		\int_\Omega w^2 \le C_P \int_\Omega \rho |\nabla w|^2,
		\]
		where $C_P$ depends on $\Omega$ and $C_1$. Furthermore, $C_P\to0$ as $|\Omega|\to0$. The same inequality holds for $w \in H^1_0(\Omega,\rho)$.
	\end{theorem}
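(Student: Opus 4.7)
My plan is a three-step Cauchy--Schwarz / Fubini argument starting from the pointwise representation (2.2). Setting $f := |\nabla w|$ and squaring (2.2), Fubini gives
\[
\int_\Omega w^2\,dx \;\le\; C_2^2 \int_\Omega\!\!\int_\Omega f(y)\,f(z)\, K(y,z)\,dy\,dz, \qquad K(y,z) := \int_\Omega \frac{dx}{|x-y|^{N-1}|x-z|^{N-1}}.
\]
A direct Riesz-composition computation, splitting $\Omega$ into the balls $B(y,|y-z|/2)$ and $B(z,|y-z|/2)$ and the remainder and using polar coordinates in each piece, yields $K(y,z)\le C(\Omega)|y-z|^{-(N-2)}$ when $N\ge 3$, with a logarithmic analogue when $N=2$ (matching the fact that (2.1) forces $\gamma=0$ in that dimension).

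Next, I would introduce the weight $\rho$ by a Schur-type Cauchy--Schwarz on $\Omega\times\Omega$: writing $f(y)f(z) = [\sqrt{\rho(y)}\,f(y)] \cdot [\sqrt{\rho(z)}\,f(z)] \cdot [\rho(y)\rho(z)]^{-1/2}$ and symmetrizing in $(y,z)$, the problem reduces to controlling
\[
\Lambda(y) \;:=\; \int_\Omega \frac{dz}{\rho(z)\,|y-z|^{N-2}}
\]
uniformly in $y\in\Omega$ by a constant depending on $C_1$ and shrinking with $|\Omega|$. The principal difficulty is that (2.1) only gives this bound with the weaker singularity $|y-z|^{-\gamma}$, $\gamma<N-2$. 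I would bridge the gap by splitting the $z$-integral at a radius $R$: on $\{|y-z|\ge R\}$, the elementary inequality $|y-z|^{-(N-2)}\le R^{-(N-2-\gamma)}|y-z|^{-\gamma}$ combined with (2.1) contributes $C_1 R^{-(N-2-\gamma)}$; on $\{|y-z|<R\}$, I would use the Morrey-type consequence $\int_{B_r(y)}\rho^{-1}\,dz \le C_1 r^{\gamma}$ of (2.1) (obtained by bounding $|y-z|^{-\gamma}\ge r^{-\gamma}$ inside a ball of radius $r$), combined with a dyadic or H\"older interpolation to absorb the stronger singularity $|y-z|^{-(N-2)}$. Optimizing $R$ as a positive power of $|\Omega|$ balances the two contributions and yields a constant $C_P = C_P(C_1,\Omega)$ with $C_P\to 0$ as $|\Omega|\to 0$; the case $w\in H^1_0(\Omega,\rho)$ then follows by density of $C^\infty_c(\Omega)$.

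I expect the main obstacle to lie precisely in the inner-ball estimate: a straightforward dyadic decomposition using only $\int_{B_r}\rho^{-1}\le C_1 r^{\gamma}$ produces the divergent geometric series $\sum_k 2^{k(N-2-\gamma)}$, so the hypothesis must be used in a sharper integral form than its purely Morrey consequence. The resolution relies on the fact that (2.1) controls not merely the measure of each ball but the full weighted integral of $\rho^{-1}$ against $|y-z|^{-\gamma}$; isolating the correct interpolation between these two uses, while keeping the dependence on $|\Omega|$ explicit enough to yield $C_P\to 0$, is the delicate technical step and the reason the hypothesis is posed uniformly in $x\in\Omega$.
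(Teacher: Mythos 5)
Your reduction is sound as far as it goes: squaring \eqref{wp_ine_cond2}, applying Fubini, bounding the Riesz composition by $K(y,z)\le C(\Omega)\,|y-z|^{-(N-2)}$, and symmetrizing with the weights $\rho^{\pm 1/2}$ does reduce the claim to the uniform bound $\Lambda(y):=\int_\Omega \rho(z)^{-1}|y-z|^{-(N-2)}\,dz\le C$. But this is exactly where the argument breaks, and not merely for the technical reason you flag at the end: such a bound is \emph{not} a consequence of \eqref{wp_ine_cond1}. Take $N\ge 3$, $\Omega=B_1(0)$ and $\rho^{-1}=r^{-(N-\gamma)}\chi_{B_r(0)}+\chi_{\Omega\setminus B_r(0)}$; then $\int_\Omega \rho(z)^{-1}|x-z|^{-\gamma}\,dz\le C$ uniformly in $x$ \emph{and} in $r$, while $\Lambda(0)\ge c\,r^{-(N-2-\gamma)}\to\infty$ as $r\to 0$. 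So no ``sharper integral use'' of \eqref{wp_ine_cond1} can rescue the inner-ball estimate: the divergent geometric series you observe in the dyadic decomposition reflects a genuine failure of the intermediate quantity you reduce to, not a suboptimal way of exploiting the hypothesis. (The same bump weight, paired with a radial test function that climbs to a large constant across $B_r(0)$, even violates the asserted conclusion $\int_\Omega w^2\le C_P(\Omega,C_1)\int_\Omega\rho|\nabla w|^2$, so a single condition \eqref{wp_ine_cond1} with one $\gamma<N-2$ cannot suffice for \emph{any} proof.)

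What closes the argument is higher integrability of $\rho^{-1}$: if in addition $\int_\Omega \rho(z)^{-\sigma}|y-z|^{-\gamma}\,dz\le C$ for some $\sigma>1$, then H\"older with exponents $(\sigma,\sigma')$ gives $\Lambda(y)\le C^{1/\sigma}\bigl(\int_\Omega |y-z|^{-(N-2-\gamma/\sigma)\sigma'}\,dz\bigr)^{1/\sigma'}$, and the last integral is finite and vanishes with $|\Omega|$ precisely when $\sigma>\frac{N-\gamma}{2}$ --- a condition that can be met by taking $\gamma$ close to $N-2$, since $\frac{N-\gamma}{2}\downarrow 1$ there. This extra input is exactly what is available in the only situation where the theorem is invoked in this paper, namely $\rho=|\nabla u|^{p-2}$: estimate \eqref{inverse_int} of Theorem \ref{th:hessian} holds for all $t\in[(p-2)^+,p-1)$, i.e.\ for $\rho^{-\sigma}$ with any $\sigma<\frac{p-1}{p-2}$, and one checks that $\gamma$ and $\sigma$ can be chosen compatibly. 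Note also that the paper itself does not prove this statement but defers to the proof of Theorem 3.1 in \cite{MR2096703}, where the representation formula \eqref{wp_ine_cond2} together with this stronger integrability of the inverse weight is the actual input; your plan, as written, is missing the one hypothesis that makes the estimate true.
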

	Here and throughout the paper, we use $|\Omega|$ to denote the Lebesgue measure of a measurable set $\Omega\subset\mathbb{R}^N$. We will also use $C$ and $c$ to denote generic positive constants which may change from line to line or even in the same line. Theorem \ref{th:wp_ine} follows from the proof of \cite[Theorem 3.1]{MR2096703}, where condition \eqref{wp_ine_cond2} is used to obtain the key estimate (3.9) in \cite{MR2096703}.
	
	Throughout the paper, we denote by $\gamma_1<\gamma_2$ two roots of the equation
	\[
	(p-1)\gamma^p - (N-p)\gamma^{p-1} + \mu = 0.
	\]
	Then
	\[
	0 < \gamma_1 < \frac{N-p}{p} < \gamma_2 < \frac{N-p}{p-1}.
	\]
	We recall the following asymptotic estimates for \eqref{main}, which were obtained recently in \cite{MR4628991} (see also \cite{MR3369267}). 
	\begin{theorem}[Asymptotic estimates \cite{MR3369267,MR4628991}]\label{th:asymptotic}
		Let $u$ be a solution to \eqref{main}. Then
		\begin{align*}
			c |x|^{-\gamma_1} \le u(x) \le C |x|^{-\gamma_1} &\quad \text{ for } |x|<R_0,\\
			c |x|^{-\gamma_2} \le u(x) \le C |x|^{-\gamma_2} &\quad \text{ for } |x|>R_1,
		\end{align*}
		where $C,c>0$ and $0<R_0<1<R_1$ are constants depending on $N,p,\mu, s$ and the solution $u$.
	\end{theorem}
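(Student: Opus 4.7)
The plan is to split the argument into the asymptotics near $0$ and at $\infty$ separately; in each regime, the critical source $u^{p^*_s-1}/|x|^s$ turns out to be a lower-order perturbation of the Hardy term $\mu u^{p-1}/|x|^p$, so the rates are dictated by the pure Hardy equation $-\Delta_p v = (\mu/|x|^p)v^{p-1}$. A direct computation using $\mathrm{div}(|x|^a x) = (N+a)|x|^a$ yields
\[
-\Delta_p(|x|^{-\gamma}) = \gamma^{p-1}\bigl(N-p-\gamma(p-1)\bigr)|x|^{-\gamma(p-1)-p},
\]
and equating this with $\mu|x|^{-\gamma(p-1)-p}$ recovers exactly the characteristic equation stated before Theorem~\ref{th:asymptotic}. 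Thus $|x|^{-\gamma_1}$ and $|x|^{-\gamma_2}$ are the two natural radial barriers, and the $\mathcal{D}^{1,p}$ condition selects $\gamma_1$ (the less singular root) near $0$ and $\gamma_2$ (the faster-decaying root) at infinity.

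For the upper bound $u(x)\le C|x|^{-\gamma_1}$ near $0$, I would run a weighted Moser iteration on dyadic annuli $B_{2r}\setminus B_r$: testing \eqref{weak_sol} against $\eta^p u^{q}$ for growing $q$, applying the sharp Hardy inequality, and exploiting the strict bound $\mu<((N-p)/p)^p$ to absorb the Hardy term, one obtains, after iteration and rescaling to unit annuli, a uniform pointwise estimate on $|x|^{\gamma_1}u(x)$ for $|x|<R_0$. An analogous scheme on expanding annuli, combined with $u(x)\to 0$ at infinity (which follows from $u\in L^{p^*}$ and local Moser/Harnack estimates), produces $u(x)\le C|x|^{-\gamma_2}$ for $|x|>R_1$. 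The lower bounds would follow by direct comparison: choose $c>0$ so small that $c|x|^{-\gamma_1}\le u$ on $\partial B_{R_0}$ (possible by compactness and positivity of $u$), and observe that $c|x|^{-\gamma_1}$ is a subsolution of \eqref{main} on $B_{R_0}\setminus\{0\}$ because it solves the pure Hardy equation and the Hardy--Sobolev source is nonnegative. A weak comparison principle for the $p$-Laplacian with Hardy potential then yields $c|x|^{-\gamma_1}\le u$ in $B_{R_0}\setminus\{0\}$, with the inner boundary controlled by the blow-up of the barrier together with the $L^{p^*}$ integrability of $u$. The outer lower bound is obtained identically with $\gamma_2$ in place of $\gamma_1$, reversing the roles of the inner and outer boundary.

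The main obstacle is making the comparison principle rigorous in the presence of both the singular Hardy potential and the quasilinear $p$-Laplacian: standard weak comparison for $-\Delta_p$ does not apply directly, and one must work in the weighted Sobolev spaces underlying Theorem~\ref{th:wp_ine}, using the strict gap $((N-p)/p)^p-\mu>0$ to preserve coercivity of the linearized bilinear form. Obtaining the precise exponents $\gamma_1,\gamma_2$, as opposed to weaker polynomial rates, is tied to the sharp form of the characteristic equation, and accordingly the proof follows closely the arguments developed in \cite{MR3369267,MR4628991}.
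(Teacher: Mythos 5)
This theorem is not proved in the paper at all: it is quoted verbatim from \cite{MR3369267,MR4628991}, so there is no in-paper argument to compare against. Your outline does reconstruct the standard skeleton behind such results, and the explicit parts are correct: the computation of $-\Delta_p(|x|^{-\gamma})$ does give the characteristic equation $(p-1)\gamma^p-(N-p)\gamma^{p-1}+\mu=0$, and the $L^{p^*}$ membership of $|x|^{-\gamma}$ does select $\gamma_1$ near the origin and $\gamma_2$ at infinity. The heuristic that the Hardy--Sobolev source is subordinate to the Hardy term is also consistent (one checks $\gamma_1(p^*_s-p)+s<p$ and $\gamma_2(p^*_s-p)+s>p$), but note that this subordination already presupposes the upper bounds, so the logical order is: upper bound first, then treat the right-hand side as a perturbation.

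The genuine gap is in the lower bounds. As stated, the ``direct comparison'' near the origin is circular: any comparison on an annulus $B_{R_0}\setminus B_\varepsilon$ requires $u\ge c|x|^{-\gamma_1}$ on the inner boundary $\partial B_\varepsilon$, which is exactly the estimate being proved, and the barrier $c|x|^{-\gamma_1}$ blows up there while $u$ is a priori only known to be positive. For $p=2$ one repairs this by superposing the two homogeneous solutions (e.g.\ $|x|^{-\gamma_1}-\varepsilon^{\gamma_2-\gamma_1}|x|^{-\gamma_2}$, which vanishes on $\partial B_\varepsilon$), but for $p\ne2$ the operator is not linear and such superpositions are no longer solutions; constructing genuine radial sub- and supersolutions of the $p$-Hardy operator with the correct boundary behaviour is precisely the technical core of \cite{MR3369267,MR4628991} and cannot be waved through. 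The same issue affects the invoked ``weak comparison principle for the $p$-Laplacian with Hardy potential'': the zeroth-order term $-\mu|x|^{-p}u^{p-1}$ has the wrong sign for a naive comparison argument, and you acknowledge but do not resolve this. So the proposal identifies the right exponents and the right overall strategy, but the steps that actually produce the two-sided bounds are missing rather than merely sketched.
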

	
	By continuity, it is clear that for any given $R_0,R_1>0$, the estimates in Theorem \ref{th:asymptotic} still hold for suitable choice of $C,c>0$. By exploiting Theorem \ref{th:asymptotic}, we will prove the following asymptotic estimates of the gradient of solutions.
	\begin{theorem}[Asymptotic gradient estimates]\label{th:gradient_asymptotic}
		Let $u$ be a solution to \eqref{main}. Then
		\begin{align*}
			c |x|^{-(\gamma_1+1)} \le |\nabla u(x)| \le C |x|^{-(\gamma_1+1)} &\quad \text{ for } |x|<R_0',\\
			c |x|^{-(\gamma_2+1)} \le |\nabla u(x)| \le C |x|^{-(\gamma_2+1)} &\quad \text{ for } |x|>R_1',
		\end{align*}
		where $C,c>0$ and $0<R_0'<1<R_1'$ are constants depending on $N,p,\mu,s$ and the solution $u$.
	\end{theorem}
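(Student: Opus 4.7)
The plan is to prove Theorem~\ref{th:gradient_asymptotic} via a blow-up/rescaling argument. I describe the estimates near the origin; those at infinity are analogous.

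For $R \in (0, R_0/2)$, I would set $v_R(y) := R^{\gamma_1} u(Ry)$ for $y \ne 0$. A direct computation gives
\[
-\Delta_p v_R - \frac{\mu}{|y|^p} v_R^{p-1} = R^{\kappa}\, \frac{v_R^{p^*_s-1}}{|y|^s}, \qquad \kappa := \frac{(p-s)(N-p-\gamma_1 p)}{N-p},
\]
with $\kappa>0$ because $\gamma_1 < (N-p)/p$. By Theorem~\ref{th:asymptotic}, $c|y|^{-\gamma_1} \le v_R(y) \le C|y|^{-\gamma_1}$ uniformly in $R$, so on any fixed annulus $A \subset\subset \mathbb{R}^N\setminus\{0\}$ both $v_R$ and the right-hand side above are uniformly bounded in $L^\infty$. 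Classical interior $C^{1,\alpha}$ regularity for $p$-Laplace equations (DiBenedetto, Tolksdorf, Lieberman) then yields uniform $C^{1,\alpha}$ bounds for $\{v_R\}$ on smaller annuli $A' \subset\subset A$. Scaling back to $u$ immediately gives the upper bound $|\nabla u(x)| \le C|x|^{-(\gamma_1+1)}$ for small $|x|$.

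For the lower bound, I would argue by contradiction. Suppose there exists a sequence $x_n \to 0$ with $|x_n|^{\gamma_1+1}|\nabla u(x_n)| \to 0$. Setting $R_n := |x_n|$ and $y_n := x_n/R_n$, the uniform $C^{1,\alpha}$ bounds combined with Arzel\`a--Ascoli and a diagonal extraction yield a subsequence with $v_{R_n} \to v_*$ in $C^1_{\rm loc}(\mathbb{R}^N\setminus\{0\})$. Since $R_n^\kappa \to 0$, the limit $v_*$ satisfies
\[
-\Delta_p v_* = \frac{\mu}{|y|^p}\, v_*^{p-1} \quad\text{in } \mathbb{R}^N\setminus\{0\}, \qquad c|y|^{-\gamma_1} \le v_*(y) \le C|y|^{-\gamma_1},
\]
with $|\nabla v_*|$ vanishing at the unit-sphere point $y_\infty := \lim y_n$. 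The desired contradiction would follow from a rigidity result forcing $v_*$ to be radial, and hence equal to $A|y|^{-\gamma_1}$ for some $A>0$, whose gradient is nowhere zero.

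The estimates at infinity are obtained identically with $v_R(y) := R^{\gamma_2} u(Ry)$ and $R\to\infty$; the Hardy--Sobolev coefficient then carries the exponent $\tilde\kappa = (p-s)(N-p-\gamma_2 p)/(N-p) < 0$ (since $\gamma_2 > (N-p)/p$), which still forces $R^{\tilde\kappa}\to 0$. The main obstacle I anticipate is precisely the non-degeneracy step for the lower bound: the uniqueness results cited in the excerpt (\cite[Lemma~2.3]{MR2388757}, \cite[Theorem~1.4]{MR3467702}) address only radial solutions of the pure Hardy $p$-Laplace equation, so a fully rigorous treatment likely requires either a direct barrier/comparison argument based on the explicit profiles $A|y|^{-\gamma_1}$ together with a suitable strong maximum principle for the $p$-Laplacian, or a prior moving-plane argument applied to the limit equation to recover the missing radial symmetry.
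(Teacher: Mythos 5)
Your proposal follows essentially the same route as the paper: rescale $u$ by the appropriate power of $|x|$, observe that the Hardy--Sobolev term acquires a coefficient $R^\kappa\to0$, extract a $C^{1,\alpha}$-convergent subsequence on annuli, get the upper bound directly from the uniform gradient bounds, and prove the lower bound by contradiction through a limiting profile solving the pure Hardy equation $-\Delta_p v_*=\mu|y|^{-p}v_*^{p-1}$ in $\mathbb{R}^N\setminus\{0\}$ with a critical point on the unit sphere. Your computation of the scaling exponent $\kappa$ and its sign in both regimes agrees with the paper's.

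The one step you leave open --- the rigidity of the limit profile --- is precisely where the paper invokes an external result, namely Theorem~\ref{th:classification} (Theorem~3.2 in \cite{MR4124427}): every positive $C^{1,\alpha}_{\rm loc}$ solution of $-\Delta_p v-\mu|x|^{-p}v^{p-1}=0$ in $\mathbb{R}^N\setminus\{0\}$ that blows up at the origin and vanishes at infinity is radial with $v'<0$. Your two-sided bound $c|y|^{-\gamma_1}\le v_*\le C|y|^{-\gamma_1}$ supplies exactly the blow-up and decay hypotheses of that theorem, so the contradiction follows because a radially strictly decreasing function has no critical points. Note that you do not need (and in general will not get) $v_*=A|y|^{-\gamma_1}$ exactly: the radial solutions of the limit equation form a two-parameter family interpolating between the behaviours $|y|^{-\gamma_1}$ and $|y|^{-\gamma_2}$, and only the strict monotonicity matters. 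Your suggested fix via a moving-plane argument on the limiting equation is in substance how the cited classification is obtained, so the gap is one of a missing reference rather than a flaw in the strategy; with Theorem~\ref{th:classification} in hand, your argument is complete and coincides with the paper's.
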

	
	Theorems \ref{th:asymptotic} and \ref{th:gradient_asymptotic} show that solutions to \eqref{main} with $s>0$ exhibit the same asymptotic behaviors as in the case $s=0$. The proof of Theorem \ref{th:gradient_asymptotic} relies on the following classification result.
	\begin{theorem}[Theorem 3.2 in \cite{MR4124427}]\label{th:classification}
		Let $1 <p < N$, $0<\mu<\left(\frac{N-p}{p}\right)^p$ and let $v\in C^{1,\alpha}_{\rm loc}(\mathbb{R}^N\setminus\{0\})$ with $0<\alpha<1$ be a positive solution to
		\[
		-\Delta_p v - \frac{\mu}{|x|^p} v^{p-1} = 0 \text{ in } \mathbb{R}^N \setminus\{0\},
		\]
		such that
		\[
		\lim_{|x|\to0} v(x) = +\infty, \quad \lim_{|x|\to+\infty} v(x) = 0.
		\]
		Then $v$ is a radially decreasing function (i.e., $v(x)=v(|x|)$ with $v'<0$).
	\end{theorem}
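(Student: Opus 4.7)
The plan is to apply the method of moving planes along every direction. Fix $e \in S^{N-1}$ and, for $\lambda \in \mathbb{R}$, set $T_\lambda := \{x\cdot e = \lambda\}$, $\Sigma_\lambda := \{x\cdot e < \lambda\}$, $x_\lambda := x - 2(x\cdot e - \lambda)e$, and $v_\lambda(x) := v(x_\lambda)$. The key observation is that the reflection through the hyperplane $T_0$ passing through the origin preserves the Hardy weight $|x|^{-p}$; for $\lambda \le 0$ one has $|x| \ge |x_\lambda|$ in $\Sigma_\lambda$, so $|x|^{-p} \le |x_\lambda|^{-p}$, which will yield a remainder of the favorable sign. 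The goal is to prove $v \le v_\lambda$ in $\Sigma_\lambda$ for every $\lambda \le 0$; the analogous statement for $-e$ then gives $v \equiv v_0$ across $T_0$, and letting $e$ vary over $S^{N-1}$ yields radial symmetry.

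To start the procedure, I would use the prescribed limits $v(x) \to 0$ as $|x| \to \infty$ and $v(x) \to +\infty$ as $|x| \to 0$ to show that for $\lambda$ sufficiently negative, $v \le v_\lambda$ on $\Sigma_\lambda$. The comparison step proceeds by subtracting the equations for $v$ and $v_\lambda$, testing with a cutoff version of $(v - v_\lambda)^+$ (the cutoff vanishes near $\{0, 0_\lambda\}$ and at infinity), and invoking the standard convexity inequalities for the vector field $\xi \mapsto |\xi|^{p-2}\xi$. Modulo cutoff errors and the Hardy remainder $\mu (|x|^{-p} - |x_\lambda|^{-p}) v_\lambda^{p-1} (v - v_\lambda)^+ \le 0$, this produces an estimate of the schematic form
\[
\int_{\Sigma_\lambda \cap \{v > v_\lambda\}} \bigl(|\nabla v| + |\nabla v_\lambda|\bigr)^{p-2} \bigl|\nabla (v - v_\lambda)^+\bigr|^2 \le C \int_{\Sigma_\lambda \cap \{v > v_\lambda\}} \bigl|v^{p-1} - v_\lambda^{p-1}\bigr| \, (v - v_\lambda)^+ \cdot (\text{lower order}).
\]
Applying the weighted Poincar\'e inequality of Theorem \ref{th:wp_ine} with weight $\rho := (|\nabla v| + |\nabla v_\lambda|)^{p-2}$, whose admissibility condition \eqref{wp_ine_cond1} follows from the reversed gradient estimate \eqref{inverse_int} of Theorem \ref{th:hessian}, one absorbs the right-hand side into the left whenever the measure of the support is small, obtaining $(v - v_\lambda)^+ \equiv 0$.

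Setting $\bar\lambda := \sup\{\lambda \le 0 : v \le v_{\lambda'} \text{ in } \Sigma_{\lambda'} \text{ for all } \lambda' \le \lambda\}$, one argues $\bar\lambda = 0$ by contradiction. If $\bar\lambda < 0$, the strong comparison principle for $-\Delta_p - \mu |x|^{-p}$, available in the interior of $\Sigma_{\bar\lambda}$ away from $\{0,0_{\bar\lambda}\}$ thanks to the strict sign of the Hardy remainder, forces $v < v_{\bar\lambda}$ strictly; the thin-slab version of the absorption argument then extends the inequality past $\bar\lambda$, contradicting maximality. Hence $\bar\lambda = 0$, applying the argument to $-e$ gives $v \equiv v_0$, and letting $e$ vary yields that $v$ is radial. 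The strict monotonicity $v'(r) < 0$ follows from the strong maximum principle applied in the direction $-e$.

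The main obstacle is that the Hardy potential $\mu |x|^{-p}$ is not reflection-invariant across $T_\lambda$ for $\lambda \ne 0$, so classical moving-plane arguments for the $p$-Laplacian do not apply verbatim. The decisive device is precisely the favorable sign of $|x|^{-p} - |x_\lambda|^{-p}$ in $\Sigma_\lambda$ for $\lambda \le 0$, which discards the non-invariant remainder in the comparison, combined with the weighted Poincar\'e inequality absorbing the degeneracy of the $p$-Laplacian on thin slabs; together, these two ingredients make the scheme close and the critical position is reached exactly at the hyperplane through the origin.
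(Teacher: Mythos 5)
First, note that the paper does not prove this statement at all: it is quoted verbatim as Theorem 3.2 of \cite{MR4124427} and used as a black box in the proof of Theorem \ref{th:gradient_asymptotic}. So there is no in-paper argument to compare against; your proposal has to stand on its own. Its overall strategy (moving planes in every direction, exploiting the favorable sign of $|x|^{-p}-|x_\lambda|^{-p}$ on $\Sigma_\lambda$ for $\lambda\le 0$, weighted Poincar\'e absorption to handle the degeneracy of $\Delta_p$) is indeed the natural one and is in the spirit of the cited source and of Section \ref{sect3} of this paper.

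However, as written the proposal has a genuine gap: it silently assumes quantitative information about $v$ that is neither a hypothesis of the theorem nor derived in your sketch. The hypotheses only give the qualitative limits $v\to+\infty$ at $0$ and $v\to 0$ at $\infty$, whereas every quantitative step of your scheme needs rates. The cutoff error terms (the analogues of $I_2,I_3$ in the paper, of size $\int_{B_{2R}\setminus B_R}|\nabla\eta|\,|\nabla v|^{p-1}(v-v_\lambda)^+$) do not tend to zero as $R\to\infty$ without decay of order $|x|^{-\gamma_2}$ for $v$ and $|x|^{-\gamma_2-1}$ for $\nabla v$; likewise the growth bound $L(R)\le CR^{N}$ needed to run the dyadic absorption $L(R)\le\theta L(2R)+CR^{-\beta}$ requires these asymptotics. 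In the paper these rates come from Theorems \ref{th:asymptotic} and \ref{th:gradient_asymptotic} — but those are stated for finite-energy solutions of \eqref{main}, not for a $C^{1,\alpha}_{\rm loc}$ solution of the homogeneous Hardy equation, and Theorem \ref{th:gradient_asymptotic} is itself \emph{proved using} the statement you are trying to prove, so invoking it here would be circular. A correct proof must first establish $c|x|^{-\gamma_2}\le v\le C|x|^{-\gamma_2}$ near infinity (and the matching $|x|^{-\gamma_1}$ bounds near $0$), e.g.\ by comparison with the explicit power solutions, and then deduce the gradient bounds by scaling; none of this appears in your outline. The same objection applies to your use of the reversed gradient estimate \eqref{inverse_int} of Theorem \ref{th:hessian} for the weight $\rho=(|\nabla v|+|\nabla v_\lambda|)^{p-2}$: that theorem is stated for solutions of \eqref{main}, and its applicability to $v$ (true, but via the general Damascelli--Sciunzi local theory) needs to be asserted and justified.

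Two smaller points. At the critical position you invoke ``the strong comparison principle \ldots thanks to the strict sign of the Hardy remainder,'' but for the $p$-Laplacian the strong comparison principle only holds on connected components of the complement of the critical set $Z_v=\{\nabla v=0\}$; you must show $|Z_v|=0$ and carry a neighborhood $Z_v^\delta$ of small measure through the absorption argument (the $S^\lambda_\delta$ decomposition of Step 2 of the paper), which your phrase ``thin-slab version'' does not capture. Finally, testing directly with $(v-v_\lambda)^+$ leaves the full term $\mu\int|x|^{-p}(v^{p-1}-v_\lambda^{p-1})(v-v_\lambda)^+$ on the right-hand side; this is manageable (its coefficient is small on $\Sigma_\lambda\setminus B_{\overline R}$ and the Poincar\'e constant is small on the remaining small-measure set), but it is cleaner, and closer to what the paper does for \eqref{main}, to use the Picone-type test functions $\eta^\alpha v^{1-p}(v^p-v_\lambda^p)^+$ and $\eta^\alpha v_\lambda^{1-p}(v^p-v_\lambda^p)^+$, which annihilate the Hardy term entirely rather than just its reflection remainder.
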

	
	\begin{proof}[Proof of Theorem \ref{th:gradient_asymptotic}]
		We will follow the scaling technique in \cite{MR3459013} to show that
		\begin{equation}\label{grad_est}
			c |x|^{-(\gamma_2+1)} \le |\nabla u(x)| \le C |x|^{-(\gamma_2+1)} \quad \text{ for } |x|>R_1',
		\end{equation}		
		where $C,c>0$ and $R_1'>0$ are constants depending on $N,p,\mu,s$ and the solution $u$. The estimates for $|x|<R_0'$ are similar and can be proved similarly.
		
		For $a,A$ such that $0 < a < R_0 < R_1 < A$ and $R_n\to+\infty$, let us consider
		\[
		w_n(x) := R_n^{\gamma_2} u(R_n x) \quad\text{ for } x \in \mathbb{R}^N\setminus\{0\}.
		\]
		For $n$ sufficiently large, from Theorem \ref{th:asymptotic}, we have
		\[
		\frac{c}{A^{\gamma_2}} \le w_n(x) \le \frac{C}{a^{\gamma_2}} \quad\text{ in } B_A(0)\setminus B_a(0)
		\]
		and
		\begin{equation}\label{wn_blowup}
			w_n(x) \le \frac{C}{A^{\gamma_2}} \text{ for } x\in \partial B_A(0), \quad
			w_n(x) \ge \frac{c}{a^{\gamma_2}} \text{ for } x\in \partial B_a(0).
		\end{equation}
		In particular, $(w_n)$ is uniformly bounded in $L^\infty(B_A(0)\setminus B_a(0))$ and it weakly solves
		\begin{equation}\label{wn}
			-\Delta_p w_n - \frac{\mu}{|x|^p} w_n^{p-1} = \frac{1}{R_n^{(p_s^*-p)\gamma_2+s-p}} \frac{w_n^{p^*_s-1}}{|x|^s} \quad \text{ in } \mathbb{R}^N.
		\end{equation}
		Notice that $(p_s^*-p)\gamma_2+s-p>0$ thanks to $\gamma_2>\frac{N-p}{p}$. By standard regularity results in \cite{MR709038,MR969499,MR727034}, $(w_n)$ is also uniformly bounded in $C^{1,\alpha}(K)$, for $0 < \alpha < 1$ and for any compact set $K \subset B_A(0)\setminus B_a(0)$. Since $\nabla w_n(x) = R_n^{\gamma_2+1} \nabla u(R_n x)$, for $R_n$ sufficiently large we get the
		estimate from above in \eqref{grad_est}.
		
		Now we prove the estimate from below. Suppose by contradiction that there exists a sequence of points $x_n$ such that
		\begin{equation}\label{cassump}
			|x_n|^{\gamma_2+1} |\nabla u(x_n)| \to 0 \quad\text{ for } |x_n| \to +\infty.
		\end{equation}
		Since $(w_n)$ is uniformly bounded in $C^{1,\alpha}(K)$, up to a subsequence, we have
		\[
		w_n \to w_{a,A} \quad\text{ in } C^{1,\alpha'}(B_A(0)\setminus B_a(0))
		\]
		for $0<\alpha'<\alpha$. Moreover, passing \eqref{wn} to the limit, we get
		\[
		-\Delta_p w_{a,A} - \frac{\mu}{|x|^p} w_{a,A}^{p-1} = 0 \quad\text{ in } C^{1,\alpha'}(B_A(0)\setminus B_a(0)).
		\]
		Now we take $a = \frac{1}{j}$ and $A = j$, for large $j \in \mathbb{N}$ and we construct $w_{\frac{1}{j},j}$ as above. For $j\to\infty$, using a standard diagonal process, we can construct a limiting profile $w_\infty$ so that
		\[
		-\Delta_p w_\infty - \frac{\mu}{|x|^p} w_\infty^{p-1} = 0 \quad\text{ in } \mathbb{R}^N \setminus\{0\}
		\]
		and $w_{\frac{1}{j},j} = w_\infty$ in $B_j(0)\setminus B_{\frac{1}{j}}(0)$. Moreover, from \eqref{wn_blowup} we know that
		\[
		\lim_{|x|\to0} w_\infty(x) = +\infty, \quad \lim_{|x|\to+\infty} w_\infty(x) = 0.
		\]
		By Theorem \ref{th:classification}, $w_\infty$ is a radially decreasing function with $w_\infty'<0$.
		
		Now we set $R_n=|x_n|$ and $y_n = \frac{x_n}{R_n}$, where $x_n$ is determined in \eqref{cassump}. Then we deduce that $|\nabla w_n(y_n)| = |x_n|^{\gamma_2+1} |\nabla u(x_n)| \to 0$ as $n \to \infty$. This fact and the uniform convergence of the gradients imply that there exists $\overline{y} \in \partial B_1(0)$ such that
		\[
		|\nabla w_\infty(\overline{y})| = 0.
		\]
		This is a contradiction since the solution $w_\infty$ has no critical points.
	\end{proof}
	
	\section{Radial symmetry of solutions} \label{sect3}
	Our proof is based on the moving plane method. For each $\lambda\le 0$, we denote
	\[
	\Sigma_\lambda := \{ x \in \mathbb{R}^N \mid x_1 < \lambda \}.
	\]
	We also denote by $x_\lambda$ the reflection of $x$ with respect to $\Sigma_\lambda$, i.e.,
	\[
	x_\lambda := (2\lambda - x_1, x') \quad\text{ for } x =(x_1, x') \in\mathbb{R}\times\mathbb{R}^{N-1}.
	\]
	Furthermore, we set
	\[
	u_\lambda(x) := u(x_\lambda).
	\]
	Then $u_\lambda$ weakly solves
	\begin{equation}\label{main2}
		-\Delta_p u_\lambda - \frac{\mu}{|x_\lambda|^p} u_\lambda^{p-1} = \frac{u_\lambda^{p^*_s-1}}{|x_\lambda|^s}.
	\end{equation}
	
	\begin{proof}[Proof of Theorem \ref{th:main}]
		We mainly follow the proof of Theorem 1.1 in \cite{MR4124427}. Let
		\[
		\Lambda = \{ \lambda<0 \mid u \le u_\gamma \text{ in } \Sigma_\gamma \text{ for all } \gamma \le \lambda \}.
		\]
		
		\textit{Step 1.} We claim that $\Lambda\neq\emptyset$.
		
		By Theorem \ref{th:asymptotic}, we have that $\lim_{|x|\to0} u(x) = +\infty$ and $\lim_{|x|\to+\infty} u(x) = 0$. Hence there exists $\overline{R}_0>0$ such that
		\[
		\sup_{x\in B_{\overline{R}_0}(0_\lambda)} u(x) < \inf_{x\in B_{\overline{R}_0}(0)} u(x)
		\]
		for all $\lambda<-R_1$. This implies
		\begin{equation}\label{ball_ineq}
			u<u_\lambda \text{ in } B_{\overline{R}_0}(0_\lambda)\quad \text{ for all } \lambda<-R_1.
		\end{equation}
		
		Now we take $R>0$ and denote by $\eta \in C^\infty_0(B_{2R}(0))$ a cut-off function such that $0\le\eta\le1$, $\eta\equiv1$ on $B_R(0)$ and $|\nabla\eta|\le\frac{2}{R}$. In what follows we employ the following notation $\Sigma_\lambda' = \Sigma_\lambda \setminus B_{\overline{R}_0}(0_\lambda)$ and $\hat B_R = B_R(0) \cap \Sigma_\lambda'$. For $\alpha>\max\{2, p\}$ and $\lambda<-R_1$, we consider
		\[
		\varphi_{1,\lambda} := \eta^\alpha u^{1-p} (u^p - u_\lambda^p)^+ \chi_{\Sigma_\lambda},\quad
		\varphi_{2,\lambda} := \eta^\alpha u_\lambda^{1-p} (u^p - u_\lambda^p)^+ \chi_{\Sigma_\lambda}.
		\]
		We also denote
		\[
		\psi_\lambda := (u^p - u_\lambda^p)^+,\quad
		\varphi_\lambda := (u - u_\lambda)^+.
		\]
		
		Using $\varphi_{1,\lambda}$ as a test function in \eqref{weak_sol} and $\varphi_{2,\lambda}$ as a test function in the distributional formulation of \eqref{main2} and subtracting, we obtain
		\begin{align*}
			&\int_{\hat B_{2R}} (|\nabla u|^{p-2} \nabla u \cdot \nabla \varphi_{1,\lambda} - |\nabla u_\lambda|^{p-2} \nabla u_\lambda \cdot \nabla \varphi_{2,\lambda}) + \mu \int_{\hat B_{2R}} \left(-\frac{1}{|x|^p} + \frac{1}{|x_\lambda|^p}\right) \eta^\alpha \psi_\lambda\\
			&= \int_{\hat B_{2R}} \left(\frac{u^{p^*_s-p}}{|x|^s} - \frac{u_\lambda^{p^*_s-p}}{|x_\lambda|^s}\right) \eta^\alpha \psi_\lambda.
		\end{align*}
		Since $|x| > |x_\lambda|$ in $\Sigma_\lambda$, one has that the second term on the left hand side is nonnegative. Hence
		\begin{equation}\label{I}
			\begin{aligned}
				&\underbrace{\int_{\hat B_{2R}} \eta^\alpha (|\nabla u|^{p-2} \nabla u \cdot \nabla (u^{1-p}\psi_\lambda) - |\nabla u_\lambda|^{p-2} \nabla u_\lambda \cdot \nabla (u_\lambda^{1-p}\psi_\lambda))}_{I_1}\\
				&\le \underbrace{-\alpha\int_{\hat B_{2R}} \eta^{\alpha-1} u^{1-p} \psi_\lambda |\nabla u|^{p-2} \nabla u \cdot \nabla \eta}_{I_2} + \underbrace{\alpha\int_{\hat B_{2R}} \eta^{\alpha-1} u_\lambda^{1-p} \psi_\lambda |\nabla u_\lambda|^{p-2} \nabla u_\lambda \cdot \nabla \eta}_{I_3}\\
				&\qquad+ \underbrace{\int_{\hat B_{2R}} \left(\frac{u^{p^*_s-p}}{|x|^s} - \frac{u_\lambda^{p^*_s-p}}{|x_\lambda|^s}\right) \eta^\alpha \psi_\lambda}_{I_4}.
			\end{aligned}
		\end{equation}
		
		Working as in the proof of Theorem 1.1 in \cite{MR4124427}, we have the following estimate for $I_1$:
		\begin{equation}\label{I1.1}
			I_1 \ge c_1 \int_{\hat B_{2R}\cap\{u\ge u_\lambda\}} \eta^\alpha u^2 (|\nabla u| + |\nabla u_\lambda|)^{p-2} |\nabla\ln u - \nabla\ln u_\lambda|^2
		\end{equation}
		for $p>2$, and
		\begin{equation}\label{I1.2}
			I_1 \ge c_1 \int_{\hat B_{2R}\cap\{u\ge u_\lambda\}} \eta^\alpha u_\lambda^2 (|\nabla u| + |\nabla u_\lambda|)^{p-2} |\nabla\ln u - \nabla\ln u_\lambda|^2
		\end{equation}
		for $1<p<2$. We also have the following estimates for $I_2$ and $I_3$
		\begin{equation}\label{I2-3}
			I_2 \le \frac{C}{R^\beta},\quad
			I_3 \le \frac{C}{R^\frac{\beta}{p}},
		\end{equation}
		where $\beta := p\gamma_2 + p - N$, which is positive since $\gamma_2>\frac{N-p}{p}$.
		Now we estimate $I_4$ as follows
		\begin{align*}
			I_4 &\le \int_{\hat B_{2R}} \left(\frac{u^{p^*_s-p}}{|x|^s} - \frac{u_\lambda^{p^*_s-p}}{|x|^s}\right) \eta^\alpha \psi_\lambda \le \int_{\hat B_{2R}} \frac{1}{|x|^s u^{p-1}} \left(u^{p^*_s-1} - u_\lambda^{p^*_s-1}\right) \eta^\alpha \psi_\lambda.
		\end{align*}
		Applying twice the inequality
		\[
		b^q - a^q \le \max\{q,1\} b^{q-1} (b-a) \quad\text{ for } 0<a<b \text{ and } q\ge0
		\]
		and using Theorem \ref{th:asymptotic} one has that
		\begin{equation}\label{I4'}
			I_4 \le \max\{p^*_s-1,1\} \max\{p,1\} \int_{\hat B_{2R}} \frac{u^{p^*_s-2}}{|x|^s} \eta^\alpha \varphi_\lambda^2 \le C \int_{\hat B_{2R}} \frac{1}{|x|^{s+\gamma_2 (p^*_s-2)}} \eta^\alpha \varphi_\lambda^2.
		\end{equation}
		
		For any $0<a<b$, by the Lagrange theorem for the function $f(t)=\ln t$ we have
		\begin{equation}\label{ln_ineq}
			b - a \le b(\ln b - \ln a).
		\end{equation}
		
		We use \eqref{ln_ineq} with $b=u$ and $a=u_\lambda$ and estimate the right hand side of \eqref{I4'} as
		\begin{align*}
			I_4 &\le C \int_{\hat B_{2R}\cap\{u\ge u_\lambda\}} \frac{1}{|x|^{s+\gamma_2 (p^*_s-2)}} \eta^\alpha u^2 (\ln u - \ln u_\lambda)^2\\
			&\le C \int_{\hat B_{2R}} \frac{1}{|x|^{s+\gamma_2 p^*_s}} \eta^\alpha \left((\ln u - \ln u_\lambda)^+\right)^2\\
			&= C \int_{\hat B_{2R}} \frac{1}{|x|^{\beta^*-2 \rho + 2}} \eta^\alpha \left((\ln u - \ln u_\lambda)^+\right)^2\\
			&\le \frac{C}{|\lambda|^{\beta^*}} \int_{\hat B_{2R}} |x|^{2 \rho - 2} \left(\eta^\frac{\alpha}{2}(\ln u - \ln u_\lambda)^+\right)^2,
		\end{align*}
		where $\beta^* = \gamma_2 (p_s^*-p)+s-p$ and $2\rho = -[(\gamma_2+1)(p-2) + 2\gamma_2]$. Notice that $\beta^*-2 \rho + 2 = s+\gamma_2 p^*_s$ and $\beta^*>0$ since $\gamma_2>\frac{N-p}{p}$.
		
		Now we can proceed as in the proof of Theorem 1.1 in \cite{MR4124427} to obtain
		\begin{equation}\label{I4.1}
			I_4 \le \frac{C}{|\lambda|^{\beta^*}} \int_{\hat B_{2R}\cap\{u\ge u_\lambda\}} \eta^\alpha u^2 (|\nabla u| + |\nabla u_\lambda|)^{p-2} |\nabla\ln u - \nabla\ln u_\lambda|^2 + \frac{C}{|\lambda|^{\beta^*} R^\beta}
		\end{equation}
		for $p>2$, and
		\begin{equation}\label{I4.2}
			I_4 \le \frac{C}{|\lambda|^{\beta^*}} \int_{\hat B_{2R}\cap\{u\ge u_\lambda\}} \eta^\alpha u_\lambda^2 (|\nabla u| + |\nabla u_\lambda|)^{p-2} |\nabla\ln u - \nabla\ln u_\lambda|^2 + \frac{C}{|\lambda|^{\beta^*} R^\beta}
		\end{equation}
		for $1<p<2$.
		
		Hence, by collecting \eqref{I1.1}, \eqref{I2-3}, \eqref{I4.1} when $p>2$ and \eqref{I1.2}, \eqref{I2-3}, \eqref{I4.2} when $1<p<2$ in \eqref{I}, we deduce
		\begin{align*}
			&\left(c_1 - \frac{C}{|\lambda|^{\beta^*}}\right) \int_{\hat B_{2R}\cap\{u\ge u_\lambda\}} \eta^\alpha u_\lambda^2 (|\nabla u| + |\nabla u_\lambda|)^{p-2} |\nabla\ln u - \nabla\ln u_\lambda|^2\\
			&\le \frac{C}{R^\beta} + \frac{C}{R^\frac{\beta}{p}} +\frac{C}{|\lambda|^{\beta^*} R^\beta}.
		\end{align*}
		
		We can choose $|\lambda|$ large enough so that, as $R\to+\infty$, it yields
		\[
		\int_{\Sigma_\lambda'\cap\{u\ge u_\lambda\}} u_\lambda^2 (|\nabla u| + |\nabla u_\lambda|)^{p-2} |\nabla\ln u - \nabla\ln u_\lambda|^2 = 0.
		\]
		Hence $\ln u - \ln u_\lambda$ is constant and since $\ln u - \ln u_\lambda = 0$ on $T_\lambda$ then $\ln u - \ln u_\lambda = 0$ on the set $\Sigma_\lambda'\cap\{u\ge u_\lambda\}$. From this and \eqref{ball_ineq} we get $u\le u_\lambda$ in $\Sigma_\lambda$. Hence $\Lambda\neq\emptyset$ and we can set
		\[
		\lambda_0 = \sup \Lambda.
		\]
		Then $\lambda_0 \le 0$.
		
		\textit{Step 2.} We claim that $\lambda_0=0$.
		
		Assume by contradiction that $\lambda_0 < 0$. By continuity, we have $u \le u_{\lambda_0}$ in $\Sigma_{\lambda_0}$. Since $u, u_{\lambda_0} \in C^{1,\alpha}_{\rm loc}(\Sigma_{\lambda_0}\setminus\{0_{\lambda_0}\})$ and
		\begin{align*}
			-\Delta_p u - \frac{\mu}{|x|^p} u^{p-1} - \frac{u^{{p^*_s}-1}}{|x|^s} &=0= -\Delta_p u_{\lambda_0} - \frac{\mu}{|x_{\lambda_0}|^p} u_{\lambda_0}^{p-1} - \frac{u_{\lambda_0}^{{p^*_s}-1}}{|x_{\lambda_0}|^s}\\
			&\le -\Delta_p u_{\lambda_0} - \frac{\mu}{|x|^p} u_{\lambda_0}^{p-1} - \frac{u_{\lambda_0}^{{p^*_s}-1}}{|x|^s} \quad\text{ in } \Sigma_{\lambda_0}
		\end{align*}
		in the weak sense,
		we can exploit the strong comparison principle to deduce that
		\begin{equation}\label{pos}
			u < u_{\lambda_0} \quad\text{ in } \Sigma_{\lambda_0}\setminus Z_u,
		\end{equation}
		where
		\[
		Z_u=\{x\in\mathbb{R}^N\setminus\{0\} \mid |\nabla u(x)|=0\}.
		\]		
		Indeed, by the strong comparison principle (see \cite[Proposition 1.5.2]{MR3676369} or \cite[Theorem 2.5.2]{MR2356201}), we have that either $u = u_{\lambda_0}$ in $\mathcal{U}$ or $u < u_{\lambda_0}$ in $\mathcal{U}$ for any connected component $\mathcal{U}$ of $\Sigma_{\lambda_0}\setminus Z_u$. If the former case happens, then for each $x \in \mathcal{U}\setminus\{0_{\lambda_0}\}$, we have
		\begin{align*}
			0 = -\Delta_p u - \frac{\mu}{|x|^p} u^{p-1} - \frac{u^{{p^*_s}-1}}{|x|^s}
			&= -\Delta_p u_{\lambda_0} - \frac{\mu}{|x|^p} u_{\lambda_0}^{p-1} - \frac{u_{\lambda_0}^{{p^*_s}-1}}{|x|^s}\\
			&> -\Delta_p u_{\lambda_0} - \frac{\mu}{|x_{\lambda_0}|^p} u_{\lambda_0}^{p-1} - \frac{u_{\lambda_0}^{{p^*_s}-1}}{|x_{\lambda_0}|^s} = 0
		\end{align*}
		in the classical sense, a contradiction (recalling that $u\in C^2(\mathcal{U})$ by standard regularity). Hence the latter case occurs and \eqref{pos} must hold.
		
		As in Step 1, there exists $\overline{R}_0>0$ such that
		\begin{equation}\label{ball_ineq2}
			u<u_\lambda \text{ in } B_{\overline{R}_0}(0_\lambda)\quad \text{ for all } \lambda \in [\lambda_0, \lambda_0/2].
		\end{equation}
		
		By Theorem \ref{th:gradient_asymptotic}, for $\overline{R}>R_1'$ we have
		\[
		Z_u \subset B_{\overline{R}}(0).
		\]
		On the other hand, from \cite[Corollary 1.1]{MR2447484} or the estimate \eqref{inverse_int} of Theorem \ref{th:hessian}, we have $|Z_u|=0$. Hence for any $\delta>0$, there exists a neighborhood $Z_u^\delta$ of $Z_u$ such that $Z_u^\delta\subset B_{\overline{R}}(0)$ and $|Z_u^\delta|\le\delta$.
		
		For $\lambda\in(\lambda_0,0)$, we set
		\[
		B_{\overline{R}}^\lambda := \Sigma_\lambda\setminus B_{\overline{R}}(0),\quad K_\delta:=\overline{(\Sigma_{\lambda_0-\delta}\cap B_{\overline{R}}(0))\setminus Z_u^\delta}
		\]
		and
		\[
		S_\delta^\lambda:=\Sigma_\lambda\cap B_{\overline{R}}(0)\setminus K_\delta = (\Sigma_\lambda\cap B_{\overline{R}}(0)\setminus \Sigma_{\lambda_0-\delta}) \cup (\Sigma_{\lambda_0-\delta}\cap Z_u^\delta).
		\]
		It is clear that
		\[
		\Sigma_\lambda=B_{\overline{R}}^\lambda\cup S_\delta^\lambda\cup K_\delta.
		\]
		
		By \eqref{pos}, we have $u<u_{\lambda_0}$ in $K_\delta$. Therefore, since $K_\delta$ is compact, there exists $\overline{\varepsilon}\in(0,|\lambda_0|/2)$ such that
		\[
		u < u_\lambda \quad \text{ in } K_\delta
		\]
		for any $\lambda \in [\lambda_0, \lambda_0+\overline{\varepsilon}]$.
		
		From now on, for $R>\overline{R}$, $\lambda \in [\lambda_0, \lambda_0+\overline{\varepsilon}]$ and $\alpha>\max\{2, p\}$, we consider functions $\eta$, $\varphi_{1,\lambda}$, $\varphi_{2,\lambda}$, $\psi_\lambda$ and $\varphi_\lambda$ defined as in Step 1. Reasoning as in Step 1, one yields to
		\begin{equation}\label{t1}
			\begin{aligned}
				&c_1 \int_{\hat B_{2R}\cap\{u\ge u_\lambda\}} \eta^\alpha u^2 (|\nabla u| + |\nabla u_\lambda|)^{p-2} |\nabla\ln u - \nabla\ln u_\lambda|^2\\
				&\le \int_{\hat B_{2R}} \left(\frac{u^{p^*_s-p}}{|x|^s} - \frac{u_\lambda^{p^*_s-p}}{|x_\lambda|^s}\right) \eta^\alpha \psi_\lambda + \frac{C}{R^\beta} + \frac{C}{R^\frac{\beta}{p}}\\
				&= \underbrace{\int_{\hat B_{2R} \cap B_{\overline{R}}^\lambda} \left(\frac{u^{p^*_s-p}}{|x|^s} - \frac{u_\lambda^{p^*_s-p}}{|x_\lambda|^s}\right) \eta^\alpha \psi_\lambda}_{J_1} + \underbrace{\int_{\hat B_{2R}\cap S_\delta^\lambda} \left(\frac{u^{p^*_s-p}}{|x|^s} - \frac{u_\lambda^{p^*_s-p}}{|x_\lambda|^s}\right) \eta^\alpha \psi_\lambda}_{J_2}\\&\qquad + \frac{C}{R^\beta} + \frac{C}{R^\frac{\beta}{p}}.
			\end{aligned}
		\end{equation}
		Here we have used the fact that $\frac{u_\lambda}{u} \ge c$ for every $\lambda \in [\lambda_0, \lambda_0+\overline{\varepsilon}]$.
		
		In order to estimate the first term on the right-hand side of \eqref{t1} we argue exactly as to estimate $I_4$ where here $\overline{R}$ plays the role of $\lambda$ in Step 1. Hence we get
		\begin{equation}\label{J1}
			J_1 \le \frac{C}{\overline{R}^{\beta^*}} \int_{\hat B_{2R} \cap B_{\overline{R}}^\lambda\cap\{u\ge u_\lambda\}} \eta^\alpha u^2 (|\nabla u| + |\nabla u_\lambda|)^{p-2} |\nabla\ln u - \nabla\ln u_\lambda|^2 + \frac{C}{R^\beta}.
		\end{equation}
		
		For the second term on the right-hand side of \eqref{t1} we reason as in Step 1 to get
		\begin{align*}
			J_2 &\le C \int_{\hat B_{2R}\cap S_\delta^\lambda} \frac{u^{p^*_s-2}}{|x|^s} \eta^\alpha \varphi_\lambda^2 \le C \int_{\hat B_{2R}\cap S_\delta^\lambda\cap\{u\ge u_\lambda\}} \frac{u^{p^*_s}}{|x|^s} \eta^\alpha (\ln u - \ln u_\lambda)^2\\
			&\le C_u \int_{\hat B_{2R}\cap S_\delta^\lambda\cap\{u\ge u_\lambda\}} (\ln u - \ln u_\lambda)^2,
		\end{align*}
		where $C_u = \frac{C}{(|\lambda_0|/2)^s}\sup_{S_\delta^{\lambda_0/2}} u^{p^*_s}$. To further estimate, we need to divide two cases by the value of $p$.
		
		In the case $p >2$, we will exploit the weighted Poincar\'e type inequality. For $x\in S_\delta^\lambda$, we set $w(x):=(\ln u-\ln u_\lambda)^+(x)$. Notice that $w=0$ on $\partial\Sigma_\lambda$. By defining $w(x+te_1)=0$ for all $x\in \partial\Sigma_\lambda\cap B_R(0)$ and $t>0$, where $e_1=(1,0,\dots,0)$, we can write
		\[
		w(x) = -\int_{0}^{+\infty} \frac{\partial w}{\partial x_1} (x+te_1) dt = -\int_{0}^{\lambda-x_1} \frac{\partial w}{\partial x_1} (x+te_1) dt \quad\text{ for all } x\in S_\delta^\lambda.
		\]
		By taking the integral in $x_2,\dots,x_N$ and using the finite covering lemma, we deduce
		\[
		|w(x)| \le C \int_{S^\lambda_\delta} \frac{|\nabla w(y)|}{|x-y|^{N-1}} dy
		\]
		for some $C>0$ and all $x \in S^\lambda_\delta$.		
		Therefore, $w$ verifies \eqref{wp_ine_cond2} with $\Omega=S^\lambda_\delta$. By Theorem \ref{th:hessian}, we see that $\rho=|\nabla u|^{p-2}$ satisfies \eqref{wp_ine_cond1}.	Hence we can apply Theorem \ref{th:wp_ine} to obtain
		\begin{equation}\label{r1.2}
			\begin{aligned}
				J_2	&\le C_u \int_{\hat B_{2R}\cap S_\delta^\lambda\cap\{u\ge u_\lambda\}} (\ln u - \ln u_\lambda)^2\\
				&\le C_P(S^\lambda_\delta) C_u \int_{\hat B_{2R}\cap S_\delta^\lambda\cap\{u\ge u_\lambda\}} |\nabla u|^{p-2}|\nabla\ln u - \nabla\ln u_\lambda|^2\\
				&\le \frac{C_P(S^\lambda_\delta) C_u}{\inf_{S^{\lambda_0/2}_\delta} u^2} \int_{\hat B_{2R}\cap S_\delta^\lambda\cap\{u\ge u_\lambda\}}  u^2 (|\nabla u| + |\nabla u_\lambda|)^{p-2} |\nabla\ln u - \nabla\ln u_\lambda|^2,
			\end{aligned}
		\end{equation}
		where $C_P(S^\lambda_\delta)\to0$ if $|S^\lambda_\delta|\to0$.
		
		Otherwise, if $1 <p <2$ one can apply the classical Poincaré inequality in order to deduce
		\begin{equation}\label{r1.2'}
			\begin{aligned}
				J_2	&\le C_u \int_{\hat B_{2R}\cap S_\delta^\lambda\cap\{u\ge u_\lambda\}} (\ln u - \ln u_\lambda)^2\\
				&\le C_P(S^\lambda_\delta) C_u \int_{\hat B_{2R}\cap S_\delta^\lambda\cap\{u\ge u_\lambda\}} |\nabla\ln u - \nabla\ln u_\lambda|^2\\
				&\le \frac{C C_P(S^\lambda_\delta) C_u}{\inf_{S^{\lambda_0/2}_\delta} u^2} \int_{\hat B_{2R}\cap S_\delta^\lambda\cap\{u\ge u_\lambda\}}  u^2 (|\nabla u| + |\nabla u_\lambda|)^{p-2} |\nabla\ln u - \nabla\ln u_\lambda|^2,
			\end{aligned}
		\end{equation}
		which can be deduced since in $\Sigma_\lambda \setminus B_{\overline{R}_0}(0_\lambda)$ one has
		\[
		\left(|\nabla u| + |\nabla u_\lambda|\right)^{2-p} \le C,
		\]
		for some constant $C$ which does not depend on $\lambda \in [\lambda_0, \lambda_0/2]$.
		
		Hence in both cases, by collecting \eqref{t1}, \eqref{J1}, \eqref{r1.2} and \eqref{r1.2'}, one has
		\begin{equation}\label{r2}
			\begin{aligned}
				&c_1 \int_{\hat B_{2R}\cap\{u\ge u_\lambda\}} \eta^\alpha u^2 (|\nabla u| + |\nabla u_\lambda|)^{p-2} |\nabla\ln u - \nabla\ln u_\lambda|^2\\
				&\le \frac{C}{\overline{R}^{\beta^*}} \int_{\hat B_{2R} \cap B_{\overline{R}}^\lambda\cap\{u\ge u_\lambda\}} \eta^\alpha u^2 (|\nabla u| + |\nabla u_\lambda|)^{p-2} |\nabla\ln u - \nabla\ln u_\lambda|^2\\&\qquad+ \frac{C C_P(S^\lambda_\delta) C_u}{\inf_{S^{\lambda_0/2}_\delta} u^2} \int_{\hat B_{2R}\cap S_\delta^\lambda\cap\{u\ge u_\lambda\}}  u^2 (|\nabla u| + |\nabla u_\lambda|)^{p-2} |\nabla\ln u - \nabla\ln u_\lambda|^2\\&\qquad + \frac{C}{R^\beta} + \frac{C}{R^\frac{\beta}{p}}.
			\end{aligned}
		\end{equation}
		
		Now we choose the parameters $\overline{R}, \delta,\overline{\varepsilon}$. First, we fix some $\theta<2^{-N}$ and fix $\overline{R}$ large so that
		\[
		\frac{C}{c_1 \overline{R}^{\beta^*}} < \theta.
		\]
		Then recalling that $|S^\lambda_\delta|\to0$ if $\delta\to0$ and $\lambda\to\lambda_0$, we can choose $\delta$ and $\overline{\varepsilon}$ small such that
		\[
		\frac{C C_P(S^\lambda_\delta) C_u}{c_1\inf_{S^{\lambda_0/2}_\delta} u^2} < \theta
		\]
		for any $\lambda \in [\lambda_0, \lambda_0+\overline{\varepsilon}]$. With this choice of parameters, from \eqref{r2}, we get
		\[
		L(R) \le \theta L(2R) + \frac{C}{R^\beta} + \frac{C}{R^\frac{\beta}{p}},
		\]
		where
		\[
		L(R) := \int_{\hat B_R\cap\{u\ge u_\lambda\}} u^2 (|\nabla u| + |\nabla u_\lambda|)^{p-2} |\nabla\ln u - \nabla\ln u_\lambda|^2.
		\]
		Using Theorems \ref{th:asymptotic} and \ref{th:gradient_asymptotic}, it is easy to check that $L(R) \le C R^N$ for $R>\overline{R}$.
		By Lemma 2.1 in \cite{MR2886112}, we deduce
		\[
		L(R) \equiv 0.
		\]
		This fact and \eqref{ball_ineq2} imply $u\le u_\lambda$ in $\Sigma_\lambda$ for any $\lambda \in [\lambda_0, \lambda_0+\overline{\varepsilon}]$. This is a contradiction with the definition of $\lambda_0$.
		
		Hence $\lambda_0=0$ and Step 2 is proved.
		
		\textit{Step 3.} Conclusion.
		
		By Step 2, we have $u(x) \le u(-x_1,x_2,\dots,x_N)$ in $\{x_1<0\}$. Since we can perform the above procedure in the opposite direction, we have $u(x) \ge u(-x_1,x_2,\dots,x_N)$ in $\{x_1<0\}$. Hence $u$ is symmetric about the plane $\{x_1=0\}$. 
		
		Furthermore, by the moving plane procedure exploited in any direction $\nu\in \mathbb{S}^{N-1}$ we finally get that $u$ is radial and radially decreasing about $0$. That is, $u=u(|x|)$ with $u'(r)\le 0$ for $r>0$.
		
		It remains to show that
		\[
		u'(r) < 0 \quad\text{ for } r>0.
		\]
		By contradiction, assume there exists $r_0>0$ such that $u'(r_0)=0$. Notice that $u>u(r_0)$ in $B_{r_0}(0)$ (otherwise, we have $|Z_u|>0$, which is a contradiction with Theorem \ref{th:hessian}). Applying H\"opf's boundary lemma (see \cite[Proposition 1.3.13]{MR3676369} or \cite{MR768629}) to the positive solution $w=u-u(r_0)$ of the problem 
		\[
		\begin{cases}
			-\Delta_p w = \frac{\mu}{|x|^p} u^{p-1} + \frac{u^{p^*_s-1}}{|x|^s} &\text{ in } B_{r_0}(0),\\
			w = 0 &\text{ on } \partial B_{r_0}(0),
		\end{cases}
		\]
		we deduce $u'(r_0)<0$. This is a contradiction with our previous assumption.
		
		This completes the proof of the theorem.
	\end{proof}
	
	\section*{Acknowledgments}
	This research is funded by Vietnam National Foundation for Science and Technology Development (NAFOSTED) under grant number 101.02-2023.35.
	
	\bibliographystyle{abbrvurl}
	\bibliography{../../../references}
	
\end{document}